\newif\ifAMS
\AMStrue\usepackage{amssymb}}
\theoremstyle{plain}
\newtheorem*{profile}{Theorem \ref{T:profile}}
\newtheorem{Thm}{Theorem}[section]
\newtheorem{Cor}[Thm]{Corollary}
\theoremstyle{definition}
\newtheorem{Def}{Definition}
\theoremstyle{remark}
\newcommand{\interior}{^{ \kern-5pt ^\circ}}
\newcommand {\bd}{\partial}
\begin{document}
\title
{Growth and isoperimetric profile of planar graphs}

\author
{Itai Benjamini}
\author
{Panos Papasoglu }

\subjclass{ 05C10, 53C20, 53C23}

\email {itai.benjamini@weizmann.ac.il} \email {papazoglou@maths.ox.ac.uk}

\address
{Dept. of Mathematics, Weizmann Institute, Rehovot, 76100, Israel}

\address
{Mathematical Institute, University of Oxford, 24-29 St Giles',
Oxford, OX1 3LB, U.K.  }

\begin{abstract} Let $\Gamma $ be a planar graph such that the
volume function of $\Gamma $ satisfies $V(2n)\leq CV(n)$ for some
constant $C>0$. Then for every vertex $v$ of $\Gamma $ and $n\in
\Bbb N$, there is a domain $\Omega $ such that $B(v,n)\subset
\Omega$, $\bd \Omega \subset B(v, 6n)$ and $|\bd \Omega | \precsim
n$.

\end{abstract}
\maketitle
\section{Introduction}

Let $\Gamma $ be a locally finite graph. If $a$ is a vertex of
$\Gamma $ we denote by $B(a,n)$ the ball of radius $n$ centered at
$a$. If $B$ is a graph, we denote by $|B|$ the number of vertices
of $B$.

Define the \emph{growth function} of $\Gamma $ at $a$ by
$$V(a,n)=|B(a,n)|$$
A graph $\Gamma $ is \emph{doubling} if there is a
constant $C>0$ such that for all $a,b\in Vert(\Gamma )$ and $n\in
\Bbb N$, $V(a,2n)\leq CV(b,n)$. We say then that $C$ is a
\emph{doubling constant} for $\Gamma $.

If $\Gamma $ is a doubling graph then the degree of
vertices is uniformly bounded.

Note that for any $d \geq 1$, there are planar graphs with the doubling property for which,
for any $a$ and $n$, $V(a,n)$ is of order $n^d$, see e.g. the last section of \cite{BS}.

Say that a graph $\Gamma $ corresponds to a \emph{tessellation}
of $\Bbb R ^2$ if there is a $k\in \Bbb N$ such that all
components of $\Bbb R^2-\Gamma $ are bounded regions with at most
$k$ sides.
\begin{Def}

Let $(X,d)$ be a metric space. An $\epsilon $-\emph{net} $N$
of $X$ is a set such that $d(v_1,v_2)> \epsilon $ for all
$v_1,v_2\in N$ and $N$ is maximal set with this property.
\end{Def}
We remark that if $N$ is an $\epsilon $-net of $X$ then $X$ is
contained in the $\epsilon $-neighborhood of $N$.

\begin{Def}

Let $\Omega $ be a subgraph of a graph $\Gamma $. Then we denote
by $\bd \Omega $ the set of all vertices of $\Gamma - \Omega $
which have a neighbor in $\Omega $.
\end{Def}

Our main result is the following:

\begin{profile} Let $\Gamma $ be a doubling planar graph. Then there is a constant $\alpha $ so that
for every vertex $v$ of $\Gamma $ there is a finite domain $\Omega
$ such that $B(v,n)\subset \Omega $, $\bd \Omega \subset B(v,6n)$
and $|\bd \Omega |\leq \alpha n$.
\end{profile}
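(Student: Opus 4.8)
The plan is to reduce the statement to producing a short cycle that encircles $B(v,n)$, and to obtain such a cycle by ``uncrossing'' an a priori long one inside boundedly many balls of radius comparable to $n$; the doubling hypothesis is what makes ``boundedly many'' and ``comparable to $n$'' precise.

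First, some reductions. Since $\Gamma $ is doubling it has bounded degree, and a standard volume-counting argument shows that for each fixed $\lambda $ there is $N=N(\lambda ,C)$ such that every ball $B(x,\lambda r)$ is covered by $N$ balls of radius $r$ and every $r$-net of it has at most $N$ points. It suffices to find a finite closed curve $\sigma $ in $\Gamma $ (a cyclic sequence of vertices and edges, not necessarily simple) with $\sigma \subset B(v,6n)\setminus B(v,n)$ and $|\sigma |\le \alpha _0 n$ which separates $B(v,n)$ from $\Gamma \setminus B(v,6n)$ in $\R ^2$: then $\Omega $, defined as the union of the vertices lying on $\sigma $ or in the region it encloses, satisfies $B(v,n)\subset \Omega \subset B(v,6n)$, and $\bd \Omega $ lies among the $\Gamma $-neighbours of $\sigma $, so $|\bd \Omega |\le (\deg \Gamma )|\sigma |$.

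That a linear bound is even plausible can be seen from a candidate for $\sigma $: a cyclic concatenation of boundedly many ``spokes'', namely initial segments $\rho _1,\dots ,\rho _M$ of geodesics issuing from $v$ that run from radius $\approx 2n$ out to radius $\approx 5n$, with outer endpoints forming a $2n$-net of $S(v,5n)$ (so $M\le M(C)$), closed up by $M$ transverse arcs; each spoke has length $\le 3n$, so $|\sigma |\le M\cdot 3n+M\cdot(\text{arc length})$, which is linear provided the transverse arcs can be chosen of length $\lesssim n$. By contrast the obvious global estimates fail badly: a Menger/max-flow argument bounds $|\bd \Omega |$ only by the number of vertex-disjoint radial paths through $B(v,6n)$, of order $|B(v,6n)|/n$, and a Lipton--Tarjan separator of $B(v,6n)$ has size of order $\sqrt{|B(v,6n)|}$ --- the former of order $n^{d-1}$, the latter $n^{d/2}$, hence useless, since the growth exponent $d$ of a doubling planar graph is unbounded (as noted after the definition of doubling).

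The actual argument, and the main obstacle: start from a long closed curve $\gamma $ encircling $B(v,n)$ inside $B(v,4n)\setminus B(v,n)$, for instance a pushed-in form of the outer boundary of $B(v,3n)$. Fix an $r$-net $x_1,\dots ,x_K$ of $B(v,4n)$ with $r$ a fixed small multiple of $n$ (so $K\le K(C)$ and the $B(x_i,r)$ cover $B(v,4n)$); then $\gamma $ decomposes into maximal sub-arcs, each lying in a single $B(x_i,r)$. Now perform surgery: whenever $\gamma $ meets some $B(x_i,r)$ in many sub-arcs, reroute its excursions out of the larger ball $B(x_i,2r)$ \emph{through} $B(x_i,2r)$ so as to amalgamate sub-arcs and reduce their total number. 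Planarity enters through the cyclic order of these excursions around $x_i$ together with the Jordan curve theorem, which keep the modified curve simple, encircling, and --- using the extra radial room, which is exactly why $6n$ rather than $2n$ appears --- inside $B(v,6n)$. The crux, and the most delicate step, is to show that this terminates with a curve meeting each $B(x_i,r)$ a number of times bounded in terms of $C$ alone; granting this, the final curve is a concatenation of $O_C(1)$ arcs, each of length at most $\diam B(x_i,2r)=O_C(n)$, whence $|\sigma |\le \alpha _0 n$ and $|\bd \Omega |\le \alpha n$ with $\alpha =\alpha (C)$.
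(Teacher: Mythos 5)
There is a genuine gap, and it sits exactly where you put the words ``granting this''. Your argument is an iterative surgery scheme: decompose a long encircling curve into sub-arcs according to an $r$-net $x_1,\dots ,x_K$ of $B(v,4n)$ and reroute excursions through $B(x_i,2r)$ until each ball is met $O_C(1)$ times. Two essential points are not established. First, termination with a uniform bound on the number of visits per ball is not proved, and it is not clear how it would be: a chord rerouted through $B(x_i,2r)$ can pass through many other balls $B(x_j,r)$ and increase their visit counts, so no obvious quantity decreases monotonically; the bound $O_C(1)$ on the number of \emph{net points} does not by itself bound the number of times the evolving curve enters each ball. Second, the claim that each surgery keeps the curve ``encircling'' is asserted via cyclic order and the Jordan curve theorem, but an excursion that leaves $B(x_i,2r)$ may wind around $v$ before returning; replacing it by a chord inside $B(x_i,2r)$ then changes the winding number around $v$ and can destroy separation. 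Controlling exactly this phenomenon is the delicate part of any proof, and your sketch contains no mechanism for it. (A smaller issue: sub-arcs of the original curve lying in a ball of radius $r$ need not have length $O(r)$, so the final length count requires that \emph{every} retained arc be replaced by a geodesic, not only the rerouted ones.)

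For comparison, the paper avoids iteration altogether, which is why it never needs a termination or bounded-visits statement. It nets only the sphere $B'=\{w: d(w,v)=4n\}$ restricted to the contour of $B(v,4n)$; by doubling such an $n$-net has at most $C^4$ points, so boundedness is immediate. Close net points (distance $\leq 2n+1$) are joined by geodesics, giving total length $O_C(n)$ at once. The danger you left unaddressed --- that a shortcut may pass on the wrong side of $v$ --- is handled by parametrising the contour by a circle, choosing the arc $S_1$ (between a pair $a,b\in B'$ with $d(a,b)\leq 2n+1$) of \emph{minimal weight} such that $f(S_1)\cup \gamma $ has winding number $1$ around $v$, and then using additivity of winding numbers to show each replacement $f(a_i',a_{i+1}')\mapsto \delta _i$ changes nothing: the minimality forces each small closed curve $f(a_i',a_{i+1}')\cup \delta _i$ to have winding number $0$ around $v$. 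A further case analysis (when the net graph $\Delta $ is disconnected) walks along the contour between components. So your proposal shares the correct starting intuition --- doubling gives bounded nets, and one should trade a long separating curve for boundedly many geodesic arcs --- but the crux of the theorem is precisely the step you assumed, and the winding-number/minimal-weight device (or some substitute for it) is what is missing from your argument.
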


Krikun \cite{Kr} has shown a similar theorem for the uniform
infinite planar triangulation (UIPT) introduced in \cite{AS}. The
volume doubling property does not hold for the UIPT, still an
asymptotic version should hold: for any vertex $v$, for large
enough $n$, $B(v,2n)$ contains order $1$ disjoint balls of radius
$n/2$ a.s. and the proof below will adapt to give Krikun's result. The
asymptotic volume growth of balls in the UIPT is order $n^4$, up to polylog's, see
\cite{A}, thus a weaker result with a polylog correction follows from our result.
For  the uniform infinite planar quadrangulation a sharp volume growth
estimate is known which implies the asymptotic volume doubling \cite{Kr2, CMM}.
For a detailed study of the geometry of the
uniform infinite planar quadrangulation and a matching lower bound
on the small cuts see \cite{CMM}.
\medskip

Our result shows that the volume and the isoperimetric profile
function are related for planar graphs. We define here the
isoperimetric profile function of a graph in a similar way as for
Riemannian manifolds:

\begin{Def} Let $\Gamma $ be a locally finite graph and let $V(n)$
be the volume function of $\Gamma $. Then the \emph{isoperimetric
profile} function of $\Gamma $, $I_{\Gamma }:\Bbb N \to \Bbb N $
is defined by:
$$ I_{\Gamma }(n)=\underset {\Omega} { \inf } \{|\bd \Omega|:
\Omega\subset \Gamma,\, |\Omega|\leq n \}$$ where $\Omega $ ranges
over all subgraphs of $\Gamma$.
\end{Def}
From theorem \ref{T:profile} we obtain the following:
\begin{Cor} Let $\Gamma $ be a doubling planar graph with volume
function $V(n)$ and isoperimetric profile function $I_{\Gamma
}(n)$.

Let $\varphi (n)=\inf \{k: \,V(k)\geq n \}$. Then there is a
constant $\alpha  $ such that $$I_{\Gamma }(n)\leq \alpha \varphi
(n) \text { for all } n\in \Bbb N$$
\end{Cor}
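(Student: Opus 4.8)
The plan is to run Theorem~\ref{T:profile} at a suitably chosen radius and then convert that radius into $\varphi(n)$ using the doubling hypothesis. Fix a vertex $v$, write $V(k)=|B(v,k)|$, and let $C$ be a doubling constant, so $V(2k)\le CV(k)$ for every $k$; here $C>1$, since $C\le 1$ would make $V$ constant and $\Gamma$ finite. Iterating three times gives $V(6k)\le V(8k)\le C^{3}V(k)$, so a ball of radius $6k$ exceeds the concentric ball of radius $k$ by only a bounded factor. This is the sole point at which the doubling assumption is used.

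Given $n$, I would take $m$ to be the largest integer with $V(m)\le n/C^{3}$. For the finitely many $n<C^{3}V(1)$ the statement is trivial after enlarging $\alpha$, because $I_{\Gamma}$ is bounded above by the maximal vertex degree (finite for a doubling graph) while $\varphi(n)\ge 1$ once $n\ge 2$; so assume $n\ge C^{3}V(1)$, whence $m\ge 1$. Apply Theorem~\ref{T:profile} at $v$ with radius $m$: this produces a finite domain $\Omega$ with $B(v,m)\subset\Omega$, $\bd\Omega\subset B(v,6m)$ and $|\bd\Omega|\le\alpha m$. Using that $\Omega\subset B(v,6m)$ one gets $|\Omega|\le V(6m)\le C^{3}V(m)\le n$, so $\Omega$ is an admissible subgraph in the definition of $I_{\Gamma}(n)$, and therefore $I_{\Gamma}(n)\le|\bd\Omega|\le\alpha m$. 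Finally $V(m)\le n/C^{3}<n$, and since $V$ is nondecreasing this forces $m<\varphi(n)=\inf\{k:V(k)\ge n\}$; hence $I_{\Gamma}(n)\le\alpha m\le\alpha\,\varphi(n)$, as required.

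The one step that needs care — and the place where I would be most cautious — is the inclusion $\Omega\subset B(v,6m)$ (equivalently, the bound $|\Omega|\le V(6m)$). It is part of the construction behind Theorem~\ref{T:profile}, where $\Omega$ is built from pieces of $B(v,6m)$; alternatively it follows from planarity, since $\bd\Omega$ is a finite vertex set separating $\Omega$ from infinity, so $\Omega$ is enclosed by $\bd\Omega$ in the plane and cannot extend past $B(v,6m)$ (if necessary one first replaces $\Omega$ by the connected component of $v$ in $\Gamma\setminus\bd\Omega$ with the bounded complementary components filled in, which does not enlarge $\bd\Omega$). Everything else — the iterated doubling inequality, the comparison of $m$ with $\varphi(n)$, the reduction for small $n$ — is routine bookkeeping.
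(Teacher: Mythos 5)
Your reduction (choose $m$ with $V(6m)\le n$ via three applications of doubling, apply Theorem~\ref{T:profile} at radius $m$, then compare $m$ with $\varphi(n)$) is exactly the derivation the paper intends — it gives no explicit proof of the corollary — and the bookkeeping parts (the $C^3$ iteration, $m<\varphi(n)$, the small-$n$ cases) are fine. But the step you yourself flagged, $\Omega\subset B(v,6m)$, is a genuine gap, and neither of your two justifications closes it. The theorem's statement only gives $B(v,m)\subset\Omega$, $\bd\Omega\subset B(v,6m)$, $|\bd\Omega|\le\alpha m$ and finiteness; this does not imply $\Omega\subset B(v,6m)$, nor even $|\Omega|\le |B(v,6m)|$. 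A finite connected $\Omega$ containing $v$ can perfectly well contain a deep ``pocket'': a large finite piece of the graph attached to the rest only through vertices lying in $B(v,6m)$. Such configurations are compatible with planarity and with the doubling hypothesis — take $\mathbb{Z}^2$ with a $K\times K$ grid ($K\gg m$) glued at a single vertex $u$ with $d(v,u)=2m$ and drawn inside a face; the doubling constant is independent of $K$. Moreover the paper's construction does not exclude this: there $\Omega$ is the component of $v$ in $\Gamma$ minus the paths $\gamma,\delta_1,\dots,\delta_k$, and since those geodesics never enter a dead-end pocket, the whole pocket lies in $\Omega$, so $|\Omega|\gg V(6m)$ and the admissibility condition $|\Omega|\le n$ in the definition of $I_\Gamma(n)$ fails for this $\Omega$. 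So the claim is not ``part of the construction'' as you assert.

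Your planarity argument is a non sequitur for the same reason: being enclosed in the plane by a curve (or by $\bd\Omega$) whose vertices lie in $B(v,6m)$ says nothing about graph distance to $v$ or about the number of vertices enclosed — graph balls are not planar disks, and the enclosed planar region can contain arbitrarily many vertices at arbitrarily large graph distance (the pocket above is exactly such a region); the ``fill in the bounded complementary components'' move only makes $|\Omega|$ larger. To repair the proof you need either a strengthened form of Theorem~\ref{T:profile} whose conclusion includes $\Omega\subset B(v,6n)$ (which requires going back into its proof and additionally cutting off such pockets, e.g.\ trimming the components of $\Omega$ attached through few vertices, while controlling the extra boundary), or a separate argument handling the case $|\Omega|>n$ (for instance recentering the theorem deep inside a pocket). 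As written, the inequality $|\Omega|\le V(6m)\le n$ is unsupported, and it is precisely the point on which the whole corollary rests.
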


The same result holds for Riemannian metrics on the plane or on
the plane with holes (again assuming the doubling property holds
for the metric) and our proof extends to this setting as well (see
\cite{P} for more information on isoperimetric profiles of planes
and planes with holes).

\subsection{Idea of the proof}

Here is a sketch of the proof of theorem \ref{T:profile}. Let
$v$ be any vertex of $\Gamma $. Consider the balls
$B(v,n),B(v,3n)$. Let $N$ be an $n$-net of $\bd B(v,3n)$. For each
vertex $w$ of $N$ consider $B(w,n/2)$. Note that all such balls
are disjoint since $N$ is an $n$-net. Also all these balls are
contained in $B(v,4n)$. So, by the doubling property, we can have
only boundedly many such balls, that is $|N|\leq \beta $, where
$\beta $ does not depend on $n$. Consider now the balls $B(w,2n)$
for all $w\in N$. $\bd B(v,3n)$ is contained in the union of these
balls. Construct a closed curve that `blocks' $v$ from
infinity as follows: if $w_1,w_2\in N$ are such that
$d(w_1,w_2)\leq 2n$ then we join them by a geodesic. So  replace
$\bd B(v,3n)$ by the `polygonal line' that we define using
vertices in $N$. This `polygonal line' blocks $v$ from infinity
and has length at most $2n \beta $. In the next section we make
precise this idea. There are some technical issues to take care
of, for example $\bd B(v,3n)$ might not be connected (and could
even have `large gaps') and the geodesic segments have to be
chosen carefully. In particular the constants  obtained will be
slightly different from the ones in this this sketch.

\section{Growth and profile}

\begin{Def}
Let $\Gamma $ be an infinite planar locally finite graph. We say
that an embedding of $\Gamma $ in the plane is \emph{tame} if for
any bounded subset $A$ of the plane, $\Gamma -A$ has at least one
connected component of infinite diameter. It is easy to see that
any infinite planar locally finite graph admits a tame embedding
to $\Bbb R ^2$.

We see now $\Gamma $ as embedded in the plane by a tame embedding.
Let $B=B(v,r)$ be a ball of $\Gamma $ and let $U$ be the unbounded
connected component of $\Bbb R^2-B$. Define the \emph{contour}
of $B(v,r)$ to be the graph $B\cap \bar U$. Note that the
contour of $B(v,r)$ is connected.
\end{Def}

Recall some basic facts about winding numbers (see e.g.
\cite{Fu}, ch. 3 for a definition and basic properties of winding
numbers).

Let $\gamma $ be a closed curve on the plane and $v$ be a point
that does not lie on $\gamma $. If the winding number $W(\gamma,
v)$ is non zero then $v$ lies in a bounded component of $\Bbb R
^2-\gamma $. If $\gamma =\gamma _1\cup \gamma _2$ where $\gamma
_1,\gamma _2$ are paths with the same endpoints $a,b$ and if
$\delta $ is another path with endpoints $a,b$ then
$$W(\gamma ,v)=W(\gamma _1\cup \delta, v)+ W(\gamma _2\cup \delta,
v) $$ If $\gamma $ is a closed curve then $W(\gamma, v)\in \Bbb
Z$. Here it will be convenient to consider the winding number
modulo 2, so in what follows by winding number of $\gamma $ around
$v$ we mean $W(\gamma, v)\,mod\,2$.

%\begin{Lem} Let $\alpha, \beta, \gamma :[0,1]\to \Bbb R^2$ paths
%with $\alpha (0)=\beta (0)=\gamma (0)$ and $\alpha (1)=\beta
%(1)=\gamma (1)$. If $P$ is a point in the plane which is not
%contained in the image of $\alpha ,\beta ,\gamma $ then $$w(\alpha
%\cup \gamma,P)=w $$
%\end{Lem}
\begin{Thm}\label{T:profile} Let $\Gamma $ be a doubling planar graph.
Then there is a constant $\alpha $ so that
for every vertex $v$ of $\Gamma $ there is a finite domain $\Omega
$ such that $B(v,n)\subset \Omega $, $\bd \Omega \subset B(v,6n)$
and $|\bd \Omega |\leq \alpha n$.
\end{Thm}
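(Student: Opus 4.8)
The plan is to follow the sketch in the introduction, turning the heuristic "polygonal line blocking $v$ from infinity" into a genuine subgraph whose boundary is the short curve we build. Fix $v$ and $n$. Work with the tame embedding of $\Gamma$ in $\R^2$, and consider the contour $\cC$ of $B(v,3n)$ — this is connected (as noted after the definition of contour) and separates $v$ from infinity. Let $N$ be an $n$-net of $\cC$. Since the balls $B(w,n/2)$, $w\in N$, are pairwise disjoint and all contained in $B(v,4n)$, the doubling property forces $|N|\leq \beta$ for a constant $\beta$ independent of $n$: indeed $\sum_{w\in N}|B(w,n/2)| \leq |B(v,4n)| \leq C^{\prime} |B(w_0,n/2)|$ for any fixed $w_0\in N$, where $C^{\prime}$ depends only on the doubling constant, and each $|B(w,n/2)|$ is comparable to $|B(w_0,n/2)|$ by doubling again.

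Next I would build the curve. Form the graph $H$ on vertex set $N$ in which $w_1\sim w_2$ whenever $d_{\Gamma}(w_1,w_2)\leq 3n$ (the threshold $2n$ of the sketch must be enlarged slightly to guarantee connectivity, since $\cC$ is covered by the balls $B(w,n)$ and a $1$-skeleton chaining argument through $\cC$ links nets of overlapping balls). For each edge of $H$ pick a $\Gamma$-geodesic between its endpoints; concatenating these along a spanning connected subgraph of $H$ — better, along all of $H$ — produces a closed-up curve $\gamma$ (a finite union of geodesic segments) of total length at most $3n\binom{\beta}{2}$, hence $\leq \alpha_0 n$ with $\alpha_0$ depending only on $C$. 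The key geometric claim is that $\gamma$ separates $v$ from $\infty$: one shows $W(\gamma,v)\neq 0 \pmod 2$ by a winding-number comparison, replacing arcs of the contour $\cC$ by the geodesic chords of $\gamma$ one at a time and using the additivity relation $W(\gamma_1\cup\gamma_2\cup\delta, v) = W(\gamma_1\cup\delta,v)+W(\gamma_2\cup\delta,v)$ recalled above, together with the fact that each replaced arc and its chord both lie in $B(v,6n)$, a set not containing $v$ in an "outer" position — so no replacement changes the mod-$2$ winding number, while $\cC$ itself winds nontrivially around $v$.

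Finally, let $\Omega$ be the union of $B(v,n)$ with the bounded component of $\R^2-\gamma$ containing $v$, intersected with $\Gamma$ and then closed up to a subgraph; since every vertex of $\gamma$ lies in some $B(w,n)$ with $w\in\cC\subset B(v,3n)$ joined by geodesics of length $\leq 3n$, we get $\gamma\subset B(v,6n)$, hence $\bd\Omega\subset B(v,6n)$, and $B(v,n)\subset\Omega$ by construction. The boundary $\bd\Omega$ is contained in the vertex set of $\gamma$ together with its $\Gamma$-neighbors in the unbounded region, so $|\bd\Omega|\leq D\cdot|\gamma|\leq D\alpha_0 n = \alpha n$, where $D$ bounds the vertex degree (finite by doubling).

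The step I expect to be the genuine obstacle is making the winding-number argument airtight in the presence of the pathologies flagged in the introduction: $\cC$ may be "thin" or the net points may be far apart in the plane even when close in $\Gamma$, so the geodesic chords can wander, and one must verify both that $\gamma$ can be organized into a bona fide closed curve (no dangling geodesic segments) and that the incremental replacement of $\cC$ by $\gamma$ never flips the parity of the winding number around $v$. Controlling this requires choosing the geodesics carefully — e.g. taking them to avoid $B(v,n)$, which is possible since their endpoints lie outside $B(v,3n)$ and have pairwise $\Gamma$-distance $\leq 3n$ — so that every segment used in the homotopy stays in the annular region $B(v,6n)\setminus B(v,n)$.
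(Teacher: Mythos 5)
Your outline reproduces the introduction's sketch, but the step you yourself flag as the ``genuine obstacle'' is exactly where the argument fails, and the justification you offer for it is not valid. You claim that replacing a contour arc between two net points by a geodesic chord never changes the mod~$2$ winding number around $v$ ``because both lie in $B(v,6n)$.'' That gives nothing: $v$ sits inside $B(v,6n)$, and a closed curve contained in $B(v,6n)$ --- even in the annulus $B(v,6n)\setminus B(v,n)$ --- can perfectly well have odd winding number about $v$. With arbitrarily chosen chords a single replacement can flip the parity, and after several replacements the final curve may fail to enclose $v$ at all. Your proposed repair (choose the geodesics to avoid $B(v,n)$) does not fix this, and moreover is not even available in your setup: it requires the net points to lie on the sphere $\{w: d(w,v)=3n\}$, but then your chaining argument for the connectivity of $H$ collapses, since that sphere can have gaps much larger than $n$ (the very pathology the paper points out). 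You restore connectivity only by taking the net on the whole contour of $B(v,3n)$, whose vertices can be arbitrarily close to $v$ (think of a tree), so the chords cannot be kept away from $v$, and in any case staying in an annulus does not control winding. A further unresolved point: the union of the geodesics over all edges of $H$ is a subgraph, not a closed curve, and connectivity of $H$ cannot substitute for the cyclic structure --- for instance the closed walk doubling a spanning tree of $H$ has even winding number about every point off it, hence never separates. One needs both a genuine closed curve, obtained by following a circle parametrization of the contour and replacing arcs in order, and a reason why each replacement curve (arc together with its chord) has winding number $0$ around $v$.

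That reason is precisely the device the paper introduces and your proposal has no analogue of. The paper parametrizes the contour of $B(v,4n)$ by a circle map $f:S\to F$, works with the set $B'$ of vertices at distance exactly $4n$, defines the weight of a subarc $S_1$ as $|f(S_1)\cap B'|$, and chooses the initial chord $\gamma$ (between $a,b\in B'$ with $d(a,b)\le 2n+1$) so that the arc $S_1$ with $f(S_1)\cup\gamma$ of winding number $1$ has minimal weight. This minimality is what forces every replacement curve $f(a_i',a_{i+1}')\cup\delta_i$ to have winding number $0$, so that additivity keeps the winding number equal to $1$ throughout the modification; and a separate mechanism (the arcs $f[a_i',b_i']$ disjoint from $B'$, bridged by extra geodesics of length at most $n$) handles the case where the chord graph on the net is disconnected --- a case your sketch does not meet only because you moved the net to the contour, at the cost described above. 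Without an analogue of this weight-minimization step your construction does not provably yield a curve enclosing $v$, so the proposal has a genuine gap at its central claim.
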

\begin{proof} Consider a tame embedding of $\Gamma $ in the
plane. Let $F$ be the contour of $B(v,4n)$ with respect to this
embedding. Clearly $F$ is a connected subset of the plane.
`Parametrise' $F$ by a map $f:S\to F$ where $S$ is a graph
homeomorphic to the circle, $f$ sends edges to edges and every
vertex $w$ such that $d(w,v)=4n$, has a unique pre-image
$f^{-1}(w)$. Note that $F$ is not necessarily a simple closed
curve, for example consider the case that $\Gamma $ is a tree.
Denote by $B'$ the set of all vertices $w$ in $\Gamma $ such that
$d(w,v)=4n$.

It is possible that $v\in F=im\,f$. As this creates some technical
problems we modify $f$ slightly to avoid this, so that the curve
$f(S)$ goes around $v$. For example we may do this as follows:
Cyclically order the edges adjacent to $v$, as
$e_1=[v,v_1],...,e_k=[v,v_k]$ and introduce new edges
$e_1',...,e_k'$ so that the edge $e_i'$ joins $v_i,v_{i+1}$ $(i\in
\Bbb Z_k)$. One may assume that the $e_i'$s do not intersect the
interior of any edge of $\Gamma $ and that the only edges
contained in the interior of $e_1'\cup ...\cup e_k'$ are
$e_1,...,e_k$.

If for two successive edges $g_1,g_2$ of $S$, we have
$f(g_1)=e_i,f(g_2)=e_{i+1}$ modify $f$ so that $f(g_1\cup
g_2)=e_i'$.

After this modification the winding number of $f(S)$ around $v$ is
defined and it is equal to $1$. If $S_1$ is a subarc of $S$
define the \emph{weight} of $S_1$ to be $w(S_1)=|f(S_1)\cap B'|$.

Note that if $a,b\in B'$ then there are two subarcs $S_1,S_2$
of $S$ such that $S=S_1\cup S_2$ and $f(S_1),f(S_2)$ are paths
with endpoints $a,b$. If we further assume that $d(a,b)< 4n $ and
$\gamma $ is a geodesic in $\Gamma $ joining $a,b$ then $v\notin
\gamma $, so one of the closed curves $f(S_1)\cup \gamma $ and
$f(S_2)\cup \gamma $ has winding number $1$ around $v$, while the
other has winding number $0$ around $v$.

If for any two vertices $a,b$ of $B'$, $d(a,b)>2n+1$ then the
balls $B(a,n)$, where $a\in B'$ are disjoint. So by the doubling
property $|B'|\leq C^3$, where $C$ is the doubling constant of
$\Gamma $. Therefore in this case one can take $\Omega =B(v,4n)$.

Otherwise pick $a,b$ in $B'$ with the following two properties:

1) $d(a,b)\leq 2n+1$.

2) There is a geodesic $\gamma $ in $\Gamma $ joining $a,b$ so
that for the subarc $S_1\subset S$ for which the closed curve
$f(S_1)\cup \gamma $ has winding number $1$ around $v$, $w(S_1)$ is
minimum possible.

%Let's say that $S=S_1\cup S_2$, so that the endpoints of $f(S_2)$
%are $a,b$.

If $f(S_1)\cap B'=\{a,b\}$ we may take $\Omega $ to be the
component of $\Gamma -\gamma $ containing $v$ and the theorem is
proved.

Otherwise, observe that $f(S_1)\cup \gamma $ separates $v$ from
infinity. We now explain how to replace $f(S_1)$ by a curve of length
linear in $n$.

Consider an $n$-net $N$ of the set $B'\cap f(S_1)$. Observe
that if $x,y\in N$ the balls $B(x,n/2),B(y,n/2)$ are disjoint and
are contained in $B(v,9n/2)$. It follows that $|N|<C^4$.

Define a graph $\Delta $ with set of vertices equal to
$N\cup \{a,b\}$. Join two vertices $x,y$ by an edge if
$d(x,y)\leq 2n+1$.

We consider first the case that $\Delta $ is connected. Then there
is a simple path in $\Delta $ joining $a,b$. This simple path
corresponds to a path $\delta $ in $\Gamma $. We obtain $\delta $
by replacing edges of the path in $\Delta $ by geodesic paths of
length $\leq 2n+1$ in $\Gamma $. It follows that $$length (\delta
)\leq (2n+1)C^4.$$ Consider now the closed path $$p=\gamma \cup
\delta
$$

We claim that the winding number of $p$ around $v$ is $1$. Indeed
$\delta=\delta _1\cup ...\cup \delta _k$ where $\delta
_i=[a_i,a_{i+1}]$ are geodesic paths of $\Gamma $ corresponding to
edges of $\Delta $ as explained above and $a=a_1,\,b=a_{k+1}$.
Clearly there are $a_1',...,a_{k+1}'\in S_1$ such that
$f(a_i')=a_i$ and  $$\bigcup _{i=1}^k (a_i',a_{i+1}')=S_1.$$
The curve $$f(a_i',a_{i+1}')\cup \delta _i$$ is closed
and has winding number 0 around $v$. This follows from the
definition of $\gamma $ and the fact that the weight of
$(a_i',a_{i+1}')$ is smaller than the weight of $S_1$. By the
additivity property of winding numbers we see that by replacing
successively $f(a_i',a_{i+1}')$ by $\delta _i$ in $f(S_1)\cup
\gamma $ the winding number around $v$ remains the same. So the
winding number of $p$ around $v$ is $1$ and the claim is proved.

Let $\Omega $ be the component of $\Gamma -p$ containing $v$.
$\Omega $ clearly has all the properties required by the theorem.

%To avoid technicalities let us  explain first the proof in the special
%case that $\Gamma $ corresponds to a tessellation. We will return
%later to the general case. Note that in this special case
%$\Delta $ is connected if $n$ is sufficiently large (which we now
%assume). We remark that we may join $a,b$ to vertices $a',b'$ of
%$N$ by geodesic segments $p_1,p_2$ of length at most $n$. Since
%$\Delta $ is connected there is a simple path in $\Delta $ joining
%$a',b'$. This simple path corresponds to a path $\delta $ in
%$\Gamma $. We obtain $\delta $ by replacing edges of the path in
%$\Delta $ by geodesic segments of length $\leq 2n+1$ in $\Gamma $.
%It follows that $$length (\delta )\leq (2n+1)C^4$$ Consider now
%the closed path $$p=\gamma \cup p_1\cup \delta \cup p_2$$ The
%winding number of $p$ around $v$ is 1. Let $\Omega $ be the
%component of $\Gamma -p$ containing $v$. $\Omega $ clearly has all
%the properties required by the theorem.

Let's return now to the proof of the general case, so we don't
assume anymore that $\Delta $ is connected.

Let $\Delta _1$ be the connected component of $\Delta $ containing
$a$. Let $V_1$ be the vertex set of $\Delta _1$.

Let's denote by $a',b'$ the endpoints of $S_1$, so
$f(a')=a,f(b')=b$.

Let $a_2'\in S_1$ be such that
$$d(f(a_2'),V _1)\leq n$$ and $f[a',a_2']$ contains all
vertices of $f(S_1)$ that are at distance $\leq n$ from $V _1$.
Join $f(a_2')$ to a vertex of $\Delta _1$ by a geodesic $\gamma
_1$ of length $\leq n$. If $d_1=|V_1|$ then, as before, we see
that there is a path $\delta _1$ joining $a$ to $f(a_2')$ such
that $$length (\delta _1)\leq d_1(2n+1)$$ and the winding number
of $\gamma \cup \delta _1\cup f[a_2',b']$ around $v$ is $1$.

Let $b_2'$ be the first vertex of $S_1$ following $a_2'$ such that
$d(f(b_2'),v)=4n$. Consider an $n$-net $N_1$ of the set $B'\cap
f(a_2',b')$. Clearly $N_1\cup V_1$ is an $n$-net of the set
$B'\cap f(S_1)$ so $|N_1\cup V_1|\leq C^4$. As before define a
graph with vertex set $\{f(b_2')\}\cup N_1$ and   consider the
connected component, say $\Delta _2$, of this graph which contains
$f(b_2')$. If $V_2$ is the vertex set of $\Delta _2$, let $a_3'\in
S_1$ be such that
$$d(f(a_3'),V _2)\leq n$$ and $f[b_2',a_3']$ contains all
vertices of $f(S_1)$ that are at distance $\leq n$ from $V _2$.
Join $f(a_3')$ to a vertex of $\Delta _1$ by a geodesic $\gamma
_2$ of length $\leq n$. If $d_2=|V_2|$ then, as before, we see
that there is a path $\delta _2$ joining $f(b_2')$ to $f(a_3')$
such that
$$length (\delta _2)\leq d_2(2n+1)$$ and the winding number of
$$\gamma \cup \delta _1\cup f[a_2',b_2']\cup \delta _2\cup
f[a_3',b']$$ around $v$ is 1.

We continue in the same way. Consider an $n$-net $N_2$ of
$f[a_3',b']\cap B'$, define similarly a graph $\Delta _3$ and a
path $\delta _3$.  Clearly this procedure terminates and
eventually we produce paths $\delta _1,...,\delta _k$ such that:

1) The winding number of $$\gamma \cup \delta _1\cup
f[a_2',b_2']\cup ...\cup f[a_k',b_k']\cup \delta _k$$ around $v$
is 1.

2)$f(a_i',b_i')\cap B'=\emptyset $ for all $i=2,...,k$.

3) $\sum _{i=1}^k length (\delta _i)\leq C^4(2n+1)$.

4) The paths $\gamma ,\delta _1,\delta _2,...,\delta _k$ are
contained in $B(v,6n)-B(v,n)$.

Let  $\Omega $ be the connected component of $\Gamma - (\gamma
\cup \delta _1\cup ...\cup \delta _k)$ containing $v$. By the
definition of $\Omega $, $B(v,n)\subset \Omega $. By property $3$
above $|\bd \Omega |\leq (C^4+1)(2n+1)$.

\end{proof}

\noindent
{\bf Remarks:}
\medskip

\noindent
1. If $\Gamma $ corresponds to a tessellation then, the
proof of the theorem shows that there is a simple closed curve of
length $\sim n$ in $B(v,6n)$ that separates $B(v,n)$ from
infinity. In the random context this has been shown by Krikun
(\cite {Kr}, \cite{Kr2}) for triangulations and quadrangulations.
Our proof applies regardless of the `shape' of the regions of the
tessellation and with uniform bounds.
\medskip

\noindent
2. Given any $\epsilon >0$ one may easily adapt the proof of the
theorem to produce regions $\Omega $ with the same properties such
that $$\bd \Omega \subset B(v,(1+\epsilon)n)$$ for $n$
sufficiently large. Of course in this case one obtains $|\bd
\Omega|\leq C(\epsilon)n$ and $C(\epsilon)\to \infty $ as
$\epsilon \to 0$.

\section{Further comments}

A graph $G$ admits unform volume growth $f(n)$, if there are $0 < c < C < \infty$,
so that for all $n$, any ball of radius $n$ in $G$ satisfies,
$$
c f(n) < |B(v,n)| < Cf(n).
$$

For planar graph admitting arbitrarily large uniform polynomial growth see e.g. \cite{BS}.
It is conjectured that planar graphs of uniform polynomial growth are recurrent for the simple random walk.
It is also conjectured \cite{AS} that the UIPT is recurrent, see also \cite{BS}.
By the Nash-Williams sufficient condition for recurrence, it is enough to find infinitely many disjoint cutsets
$\{C_i\}$ separating the root from infinity. so that $\sum |C_i|^{-1} = \infty$.
The theorem above is a step in that direction, still we don't know if planar graphs of uniform polynomial growth
admits such cutsets? Maybe not.
\medskip

Assume $G$ is a planar graph of uniform polynomial growth $n^d, d > 2$, by the theorem above $G$ admits bottlenecks.
This suggests the {\em conjecture} that simple random walk on $G$ will be subdiffusive,
as it will spend a lot of time in domains with small boundary before exciting.
That is, the expected distance to the root by time $t$
is bounded by $t^{\alpha}$ for $\alpha < 1/2$. Does $\alpha = d^{-1}$?
Subdiffusivity was recently established for the UIPT by Nicolas Curien and the first author.
\medskip

What about a high dimensional generalization?
A $d$-sphere packing is a collection of $d$-dimensional
balls with disjoint interiors. Associated to the packing an unoriented graph $G = (V,E)$ called the {\it d-tangency
graph}, where vertices corresponds to the $d$-balls and edges  are  between any two tangent balls, see \cite{BC}.
Is it the case that for any $d$ a $d$-tangency graph with the doubling property
admits cutsets outside a ball of radius $n$ of size $n^{d-1}$?
\medskip

Let $G$ be a planar triangulation which is doubling and further assume all balls has growth $r^d, d >2$
up to a multiplicative constant.
Is there such  $G$ for which  all complements of balls are connected, for all balls?
Or as in the UIPT, the complements of some balls  admit several connected components, some of size proportional to the
ball?

\end{document}
\bye